\documentclass[10pt]{article}
\usepackage[a4paper, top=1.5cm, bottom=1.5cm, left=.1cm, right=.1cm]{geometry}
\usepackage{enumitem} 
\usepackage[nottoc,notlot,notlof]{tocbibind}
\usepackage{amsmath,amssymb,amsfonts,amsthm}
\usepackage{tikz}
\usetikzlibrary{tikzmark,decorations.pathreplacing}
\usepackage{latexsym}
\usepackage{tikz}
\usepackage{booktabs}
\usepackage{caption}
\usepackage{mathtools} 
\usepackage[normalem]{ulem}
\usetikzlibrary{automata,positioning}
\usetikzlibrary{chains,fit,shapes}
\usetikzlibrary{calc}
\usetikzlibrary{arrows}
\usepackage{float}
\usetikzlibrary{positioning,calc}
\usetikzlibrary{graphs}
\usetikzlibrary{graphs.standard}
\usetikzlibrary{arrows,decorations.markings}
\usepackage{tikz}
\usetikzlibrary{decorations.pathreplacing}
\usepackage{multicol}
\usepackage{xcolor}
\usepackage{hyperref}
\usepackage{abstract}
\usepackage{url}
\newtheorem{theorem}{Theorem}[section]

\theoremstyle{definition}
\newtheorem{definition}{Definition}[section]

\numberwithin{equation}{section}
\newcounter{extension}

\date{}
\title{Word-Representability of Shift Graphs}

\author{\hspace{1cm} Suchanda Roy \ and Ramesh Hariharasubramanian \\ 
{{\footnotesize r.suchanda@iitg.ac.in},\ {\footnotesize  ramesh\_h@iitg.ac.in}}\\{\footnotesize Department of Mathematics, Indian Institute of Technology Guwahati, Guwahati, Assam 781039, India}}

\begin{document}
	\maketitle

	\begin{abstract}
A graph \(G=(V,E)\) is \emph{word-representable} if there exists a word \(w\) over the alphabet \(V\) such that letters \(x\) and \(y\) alternate in \(w\) if and only if \(xy\in E\). For integers \(n>k>0 \), the shift graph \(G(n,k)\) is the graph whose vertex set consists of all increasing \(k\)-tuples \((x_1,x_2,\dots,x_k)\) with \(1\le x_1<x_2<\cdots<x_k\le n\), where two vertices \((x_1,\dots,x_k)\) and \((y_1,\dots,y_k)\) are adjacent whenever \(x_{i+1}=y_i\) for all \(1\le i\le k-1\) or \(y_{i+1}=x_i\) for all \(1\le i\le k-1\). Shift graphs are classical examples of sparse graphs having arbitrarily high chromatic number and odd girth. We further observe that shift graphs arise naturally as induced subgraphs of simplified de Bruijn graphs. Although simplified de Bruijn graphs contain non-word-representable members in general, we prove that the entire class of shift graphs is word-representable. We also introduce a natural generalization of shift graphs in which adjacency is defined by more than one shift condition, and show that these generalized shift graphs are likewise word-representable. As a consequence, we obtain an explicit family of graphs exhibiting a contrast between line graph and line digraph constructions: there exists a family of word-representable graphs whose line graphs are not word-representable when the number of vertices is at least \(5\), while their line digraphs are word-representable.

\noindent\textbf{Keywords:} semi-transitive orientation, split graph, minimal forbidden subgraph, word-representable graph.\end{abstract}

	\maketitle
	\pagestyle{myheadings}
\section{Introduction}
The theory of word-representable graphs lies at the intersection of graph theory and combinatorics on words. First introduced by Sergey Kitaev and Steven Seif in the context of Perkins semigroups~\cite{kitaev2008word}, this notion has since developed into a rich area of research with strong connections to algebra, graph theory, graph orientations, hereditary graph classes, and combinatorics on words. A graph \(G=(V,E)\) is said to be \emph{word-representable} if there exists a word \(w\) over the alphabet \(V\) such that, for any two distinct letters \(x,y\in V\), the letters \(x\) and \(y\) alternate in \(w\) if and only if \(xy\in E\). This graph class contains several important families, including complete graphs, circle graphs, comparability graphs, and all \(3\)-colorable graphs, and has been extensively studied in the monographs~\cite{kitaev2017comprehensive,kitaev2015words}. Moreover, recognizing whether a graph is word-representable is known to be NP-complete.

A major breakthrough in the subject was the characterization of word-representable graphs via graph orientations. An orientation of a graph is called \emph{transitive} if the presence of \(u \to v\) and \(v \to z\) implies \(u \to z\). Graphs admitting such an orientation are known as comparability graphs. Extending this idea, an orientation is said to be \emph{semi-transitive} if it is acyclic and satisfies the following condition: for every directed path
\(
u_1 \to u_2 \to \cdots \to u_t,\;\; t \geq 2,
\)
either there is no edge between \(u_1\) and \(u_t\), or the subgraph induced by \(u_1,\dots,u_t\) forms a transitive tournament; that is, all edges \(u_i \to u_j\) exist whenever \(1 \leq i < j \leq t\). An undirected graph is called semi-transitive if it admits such an orientation. This notion was introduced in~\cite{HALLDORSSON2016164}, where it was shown that a graph is word-representable if and only if it admits a semi-transitive orientation. This characterization has become one of the principal tools in the area and has led to numerous classification results.

For integers \(n>k\geq 2\), the shift graph \(G(n,k)\) is the graph whose vertices are all increasing \(k\)-tuples \((x_1,x_2,\dots,x_k)\) with \(1\leq x_1<x_2<\cdots<x_k\leq n\), where two vertices \((x_1,\dots,x_k)\) and \((y_1,\dots,y_k)\) are adjacent whenever \(x_{i+1}=y_i\) for all \(1\leq i\leq k-1\), or symmetrically \(y_{i+1}=x_i\) for all \(1\leq i\leq k-1\). Introduced by Erdős and Hajnal \cite{erdHos1966chromatic}, shift graphs are classical examples in extremal graph theory: they are sparse, triangle-free, have chromatic number growing logarithmically with \(n\), and admit large odd girth. They also play an important role in order theory, as shown by Füredi, Hajnal, Rödl, and Trotter in \cite{furedi1992interval} through connections with interval orders and order dimension.

Similar highly chromatic triangle-free families, such as extended Mycielski graphs \cite{hameed2024semi} and Mycielski graphs \cite{kitaev2025note} have previously been investigated in the context of word-representability, where both representable and non-representable examples occur. Likewise, simplified de Bruijn graphs \cite{petyuk2022word,huang2024embedding}, which naturally contain shift graphs as an induced-subgraph family, also have non-word-representable members. In contrast to these broader classes, we show that every shift graph is word-representable. We further introduce a natural generalization of shift graphs in which adjacency is defined by more than one shift condition, and prove that these generalized shift graphs are also word-representable. As a consequence, we obtain an explicit family exhibiting a contrast between line graph and line digraph constructions: there exists a family of word-representable graphs whose line graphs are not word-representable when the number of vertices is at least \(5\), while their line digraphs are word-representable.

The remainder of the paper is organized as follows. Section~\ref{p} reviews existing results regarding word-representability, extended Mycielski graphs, simplified de Bruijn graphs, and related graph classes. Section~\ref{s} presents our main results. Finally, Section~\ref{conclusion} concludes the paper with remarks and directions for future research.

 \section{Preliminaries}\label{p}
One of the key developments in the study of word-representable graphs is their characterization in terms of semi-transitive orientations, which forms the basis for many subsequent results. We begin this section with the definition of semi-transitive orientation. For a comprehensive account of the theory and related results, see~\cite{kitaev2015words,kitaev2017comprehensive,halldorsson2011alternation,huang2024embedding,HALLDORSSON2016164}.
\begin{definition}

An orientation is said to be \emph{semi-transitive} if it is acyclic and satisfies the following condition: for every directed path
\(
u_1 \to u_2 \to \cdots \to u_t,\;\; t\geq 2,
\)
either there is no edge between \(u_1\) and \(u_t\), or the subgraph induced by \({u_1,u_2,\dots,u_t}\) forms a transitive tournament; that is, all directed edges \(u_i \to u_j\) are present whenever \(1\leq i<j\leq t\).
\end{definition}

\begin{theorem}\cite{HALLDORSSON2016164}\label{ch} A graph is word-representable if and only if it admits a semi-transitive orientation.
\end{theorem}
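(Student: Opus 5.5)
The statement is the Halldórsson--Kitaev--Pyatkin characterization, and I would prove the two directions separately. Throughout I would rely on two standard facts about representations. First, a word-representable graph admits a \emph{uniform} representant, meaning one in which every letter occurs exactly $k$ times for some fixed $k$. Second, the set of words representing a graph is closed under cyclic shift whenever the representant is uniform.

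\emph{Word-representable implies semi-transitive.} Take a uniform word $w$ representing $G$. Orient each edge $xy$ as $x\to y$ when the first occurrence of $x$ in $w$ precedes the first occurrence of $y$.
\begin{itemize}
\item This orientation is acyclic, because it is induced by a linear order on $V$.
\item Alternation of $x$ and $y$ forces $w$ restricted to $\{x,y\}$ to be $xyxy\cdots xy$, with the $i$-th copy of $x$ preceding the $i$-th copy of $y$ for every $i$.
\item Consider a directed path $u_1\to\cdots\to u_t$ with $u_1u_t\in E$. Chaining the previous observation gives, for each $i$, the order $u_1^{(i)}<u_2^{(i)}<\cdots<u_t^{(i)}$ on $i$-th occurrences.
\item The edge $u_1u_t$ must be oriented $u_1\to u_t$, so $u_t^{(i)}<u_1^{(i+1)}$.
\item Hence all $i$-th occurrences of $u_1,\dots,u_t$ lie in a block that precedes every $(i+1)$-st occurrence. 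The restriction of $w$ to $\{u_1,\dots,u_t\}$ is therefore $(u_1\cdots u_t)^k$, so every pair $u_j,u_l$ alternates, and the path induces a transitive tournament.
\end{itemize}

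\emph{Semi-transitive implies word-representable.} This direction is harder, and I would argue it constructively.
\begin{itemize}
\item Fix a semi-transitive orientation and a topological order $P=v_1v_2\cdots v_n$ of it.
\item Start with $w_0=P$. This represents the complete graph, since every pair occurs once and so alternates trivially.
\item Iteratively append further permutations $P_1,P_2,\dots$ of $V$. Each new permutation is chosen so that every non-edge $xy$ gets destroyed: $x$ and $y$ must appear in the order opposite to their previous appearance, creating $xx$ or $yy$ in the restriction. At the same time, every edge $x\to y$ must keep the order $x$ before $y$ in each block.
\item Concretely, for a non-edge $\{x,y\}$ with $x$ before $y$ in $P$, I would build a permutation in which $y$ and everything forced to precede it comes before $x$. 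This means placing $y$ together with its set $D$ of all vertices having a directed path to $y$ but no directed path to or from $x$, then $x$, then the rest.
\item The permutation must be a linear extension of the orientation restricted to the edges, so that no edge alternation is broken. It must also put $y$ before $x$.
\end{itemize}

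The main obstacle is showing that such a linear extension exists. One needs that no edge forces $x$ before $y$ through a directed path $x\to\cdots\to y$. A directed path from $x$ to $y$ can exist, because $xy$ is a non-edge, and this is exactly where semi-transitivity must be used. I would therefore refine the construction as follows.
\begin{itemize}
\item Instead of swapping $x$ and $y$ outright, I would use the shortcut-free structure: any $x\to\cdots\to y$ path with $xy$ a non-edge contains no shortcut.
\item I would order vertices by a ``reachable-from-$y$ first'' rule. That is, put $y$ and the vertices not reachable from $x$ early, but split the copies so that the occurrence pattern of $x,y$ becomes $x\,y\,y\,x$ or similar.
\item Semi-transitivity must guarantee that every edge $uv$ still sees the pattern $uvuv$. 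It does so by ruling out configurations where an edge $u\to v$ has $u$ reachable from $x$ and $v$ reaching $y$ along a path that would then need a shortcut.
\end{itemize}
Verifying this edge-preservation case analysis is the crux. Once it is done, concatenating one such block per non-edge yields a word representing $G$.
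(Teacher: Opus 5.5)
The paper does not prove this theorem; it only cites Halldórsson--Kitaev--Pyatkin. So I judge your proposal on its own merits.

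Your direction ``word-representable $\Rightarrow$ semi-transitive'' is correct and is the standard argument: take a uniform representant, orient by first occurrences, chain the $(u_ju_{j+1})^k$ patterns, and use $u_t^{(i)}<u_1^{(i+1)}$.

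The converse, however, is not proved. Your first construction is a concatenation $PP_1P_2\cdots$ of permutations that are all linear extensions of the orientation, and it cannot work in principle. In a concatenation of permutations, two letters alternate iff they have the same relative order in every block, so such a word always represents a comparability graph. More concretely, whenever $y$ is reachable from $x$ by a directed path and $xy\notin E$, every linear extension puts $x$ before $y$, so that non-edge is never destroyed. Take $C_5$ on $1,\dots,5$ oriented by $5\to1\to2$, $3\to2$, $3\to4$, $5\to4$. This orientation is semi-transitive, yet $5$ and $2$ are non-adjacent and alternate in every such word.

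You notice this obstacle, but the ``refinement'' that follows never defines a block. The edge-preservation check you call the crux is exactly the content of the theorem. Moreover, the pattern $xyyx$ you aim for is impossible precisely in the problematic case. If every arc $p\to q$ restricts to $pqpq$ within a block, then chaining along a path $x\to\cdots\to y$ forces the $i$-th copy of $x$ before the $i$-th copy of $y$. The only non-alternating option left is $xxyy$, so both copies of $x$ must precede both copies of $y$.

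The missing idea is to use, for each vertex $x$, one $2$-uniform block whose two copies are genuinely interleaved. Partition $V\setminus\{x\}$ into five parts: in-neighbours $N^-$, out-neighbours $N^+$, non-adjacent ancestors $B$, non-adjacent descendants $F$, and the rest $R$. Then set
\[
w_x \;=\; Q\; B\; x\; N^+\; N^-\; x\; F\; Q' .
\]
Here $Q$ lists the first copies of $B\cup N^-\cup R$ and $Q'$ lists the second copies of $R\cup N^+\cup F$, each in a topological order. The middle blocks list, in topological order, the second copies of $B$, the first copies of $N^+$, the second copies of $N^-$, and the first copies of $F$.

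With this layout, $x$ restricts to $zzxx$, $xxyy$ and $rxxr$ against $z\in B$, $y\in F$ and $r\in R$ respectively. A routine check shows every arc $p\to q$ restricts to $pqpq$, provided there are no arcs $N^-\to B$, $B\to N^+\cup F$, $N^-\to F$ or $F\to N^+$. Each such arc would create a shortcut involving $x$, and this is exactly where semi-transitivity is used. The remaining arc directions are forced by acyclicity and by the definition of $R$.

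Concatenating $w_x$ over all $x\in V$ then gives a $2|V|$-uniform word representing $G$. Every arc appears as $(pq)^{2|V|}$, and each non-edge $\{x,y\}$ already fails to alternate inside $w_x$.
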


The notion of semi-transitive orientation extends the classical concept of transitive orientation. As a corollary of this characterization, in~\cite{HALLDORSSON2016164} Halldorsson et al. established word-representability of $3$-colorable graphs, as stated in the following theorem.
\begin{theorem}\cite{HALLDORSSON2016164}
    Every $3$-colorable graph is word-representable.
\end{theorem}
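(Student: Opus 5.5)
The plan is to use the semi-transitive orientation characterization of Theorem~\ref{ch}. It is enough to show that every $3$-colorable graph $G$ admits a semi-transitive orientation. Fix a proper colouring $c\colon V(G)\to\{1,2,3\}$, and orient every edge $uv$ from the endpoint of smaller colour to the endpoint of larger colour. This is well defined because adjacent vertices receive different colours.

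The first step is acyclicity. Along any directed path $u_1\to u_2\to\cdots\to u_t$ the colours $c(u_1)<c(u_2)<\cdots<c(u_t)$ are strictly increasing. So no directed cycle can exist, since it would force $c(u_1)<c(u_1)$.

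The second step is to rule out shortcuts, and it is where the argument really takes place. Because colours strictly increase along a directed path and only three colours are available, every directed path has $t\le 3$ vertices. For $t=2$ the induced subgraph on $\{u_1,u_2\}$ is a single arc, which is trivially a transitive tournament. For $t=3$ we have the path $u_1\to u_2\to u_3$ with colours $1,2,3$. If $u_1u_3$ is an edge, it is oriented $u_1\to u_3$ because $c(u_1)<c(u_3)$. Then all arcs $u_i\to u_j$ with $i<j$ are present, and $\{u_1,u_2,u_3\}$ induces a transitive tournament. Hence every path whose endpoints are adjacent induces a transitive tournament, so the orientation is semi-transitive.

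Theorem~\ref{ch} then gives that $G$ is word-representable. The only point needing care is the observation that a $3$-colouring bounds the length of directed paths by three vertices. This bound removes all potential shortcuts, because a shortcut requires a path on at least four vertices, and that is what makes the argument go through.
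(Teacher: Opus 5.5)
Your proof is correct and matches the intended argument. The paper gives no proof of its own and only cites the result as a corollary of the semi-transitive characterization (Theorem~\ref{ch}); your colour-ordered orientation, in which every directed path has at most three vertices and so no shortcut can occur, is the standard derivation of that corollary.
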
 
However, word-representability is no longer guaranteed for graphs of chromatic number at least \(4\). Consequently, the study of highly chromatic graph classes from the perspective of word-representability has become an active direction of research. In this context, Hameed investigated the word-representability of extended Mycielski graphs~\cite{hameed2024semi}, while Kitaev et al. studied Mycielski graphs~\cite{kitaev2025note}. The main results from their work are presented in the following.
\begin{theorem}\cite{hameed2024semi}
Let \(G\) be a graph and let \(\mu'(G)\) denote the extended Mycielski graph of \(G\). Then \(\mu'(G)\) is semi-transitive if and only if \(G\) is bipartite.
\end{theorem}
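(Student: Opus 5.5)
The proof splits into two directions. Throughout I use the definition of \(\mu'(G)\) from~\cite{hameed2024semi}, which I take to be the following. For \(G\) on \(v_1,\dots,v_n\), the graph \(\mu'(G)\) has vertices \(v_1,\dots,v_n,u_1,\dots,u_n,w\). Its edges are those of \(G\), the edges \(u_iv_j\) for every \(v_j\in N_G[v_i]\) (the \emph{closed} neighbourhood, which is what distinguishes it from the ordinary Mycielskian), and the edges \(wu_i\) for all \(i\). If the paper's definition differs, the plan below adapts with the obvious changes.

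\emph{Bipartite \(G\) gives a semi-transitive \(\mu'(G)\).} Here I would just exhibit a proper \(3\)-colouring and invoke the theorem that every \(3\)-colorable graph is word-representable, together with Theorem~\ref{ch}. If \(V(G)=A\cup B\) is a bipartition, colour \(A\) with \(1\), \(B\) with \(2\), every \(u_i\) with \(3\), and \(w\) with \(1\). This is proper: the \(u_i\) are pairwise non-adjacent, they are only adjacent to vertices of \(V(G)\) and to \(w\), and \(w\) is adjacent only to the \(u_i\).

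\emph{Non-bipartite \(G\) gives a non-semi-transitive \(\mu'(G)\).} First I reduce to odd cycles. A shortest odd cycle \(C=v_1v_2\cdots v_{2k+1}\) of \(G\) is induced, so \(N_G[v_i]\cap V(C)=N_C[v_i]\). Consequently the vertices of \(C\), their copies \(u_1,\dots,u_{2k+1}\), and \(w\) induce exactly \(\mu'(C_{2k+1})\). Semi-transitivity is hereditary, since restricting an orientation keeps acyclicity and the shortcut condition. It therefore suffices to show that \(\mu'(C_{2k+1})\) has no semi-transitive orientation for every \(k\ge 1\).

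\emph{The case \(k=1\).} Here \(\mu'(C_3)\) contains \(K_4\) on \(\{v_1,v_2,v_3,u_1\}\), and \(w\) is joined to \(u_1,u_2,u_3\). I would settle this by a short direct case analysis. Fix the transitive order on each \(K_4\) \(\{v_1,v_2,v_3,u_i\}\); they all share the triangle \(v_1v_2v_3\), with \(v_1\to v_2\to v_3\) say. Then split on the position of \(w\) relative to the \(u_i\), and show that some directed path through \(w\) and two \(u\)'s closes a shortcut.

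\emph{General \(k\) (the main obstacle).} Suppose \(\mu'(C_{2k+1})\) has a semi-transitive orientation. Each \(u_i\) together with \(v_{i-1},v_i,v_{i+1}\) gives the triangles \(u_iv_{i-1}v_i\) and \(u_iv_iv_{i+1}\). Every triangle must be transitively oriented, and each \(K_4\)-free set \(\{u_i,v_{i-1},v_i,v_{i+1}\}\) induces \(K_4\) minus the edge \(v_{i-1}v_{i+1}\). The shortcut condition on this \(4\)-vertex subgraph forbids the path \(v_{i-1}\to u_i\to v_{i+1}\) together with an edge \(v_{i-1}v_{i+1}\); more usefully, it constrains \(u_i\) to lie "between" the cycle edges consistently.

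The plan is as follows.
\begin{enumerate}[label=(\roman*)]
\item Since \(C_{2k+1}\) has odd length, its acyclic orientation cannot alternate, so some \(v_i\) has \(v_{i-1}\to v_i\to v_{i+1}\).
\item Show that this forces the orientation of the edges at \(u_i\), and in particular the direction \(u_i\to w\) or \(w\to u_i\).
\item Propagate around the cycle, comparing consecutive \(u_i,u_{i+1}\) through the common neighbours \(v_i,v_{i+1}\).
\item Obtain a directed path of the form \(u_a\to\cdots\to w\to u_b\) (or its reverse) whose endpoints are adjacent or which contains a shortcut with a missing edge such as \(u_au_b\) or \(v_jw\), giving a contradiction.
\end{enumerate}
The parity of \(2k+1\) is what prevents a globally consistent choice. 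I expect this propagation and parity bookkeeping to be the hard part. I would first try to prove a local lemma: in any semi-transitive orientation, \(w\) is a source or sink relative to \(u_i\) exactly according to whether \(v_i\) is a source or sink in \(C\), or a similar invariant. Oddness of the cycle then contradicts that lemma.
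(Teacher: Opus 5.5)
The paper gives no proof of this statement; it only quotes it from \cite{hameed2024semi}. Your proposal therefore has to stand on its own, and it does not. The direction ``non-bipartite \(\Rightarrow\) not semi-transitive'' is left as a plan, and under the definition you adopt it is actually false.

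Your \(3\)-colouring is fine. So are the reduction to an induced shortest odd cycle and the heredity of semi-transitivity. The problem is the case \(k=1\). With closed neighbourhoods, every \(u_i\) is adjacent to all of \(v_1,v_2,v_3\). Orient \(w\to u_i\) and \(u_i\to v_j\) for all \(i,j\), together with \(v_1\to v_2\to v_3\) and \(v_1\to v_3\). This orientation is semi-transitive:
\begin{enumerate}[label=(\alph*)]
\item It is acyclic.
\item A directed path through \(w\) must start at \(w\). Once it has at least three vertices it ends in \(\{v_1,v_2,v_3\}\), which contains no neighbour of \(w\).
\item A directed path avoiding \(w\) contains at most one \(u_i\), at its start. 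It therefore lies in a transitively oriented copy of \(K_{1,1,1,3}\).
\end{enumerate}
So \(\mu'(K_3)\) would be semi-transitive although \(K_3\) is not bipartite. Your planned \(k=1\) case analysis (a shortcut through \(w\) and two \(u\)'s) cannot succeed: here \(w\) is a source, so no such path exists. It also follows that your definition is not one under which the cited theorem holds. The ``obvious changes'' you mention are therefore not obvious. You must first fix the correct definition, then re-check both the colouring and the claim that the odd-cycle vertices induce \(\mu'(C_{2k+1})\).

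For \(k\ge 2\), your step (ii) is unfounded. Suppose \(v_{i-1}\to v_i\to v_{i+1}\). The set \(\{u_i,v_{i-1},v_i,v_{i+1}\}\) forces \(v_{i-1}\to u_i\to v_{i+1}\), but it leaves \(u_iv_i\) free. It says nothing at all about \(u_iw\), since \(w\) has no neighbour among the \(v_j\). Under your definition, the obstruction for \(k\ge2\) does not involve \(w\) at all.

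The missing idea is the \(K_4-e\) lemma. Take a semi-transitive orientation of \(K_4\) minus an edge \(ab\), with \(c\to d\). Then either both \(a\) and \(b\) lie between \(c\) and \(d\) (that is, \(c\to a\to d\) and \(c\to b\to d\)) or neither does, because exactly one of them lying between creates a shortcut.
\begin{enumerate}[label=(\alph*)]
\item Applied to \(\{u_i,u_{i+1},v_i,v_{i+1}\}\), the lemma says \(u_i\) lies between the ends of \(v_iv_{i+1}\) if and only if \(u_{i+1}\) does.
\item Applied to \(\{u_i,v_{i-1},v_i,v_{i+1}\}\), it says the following about the ``between'' status of \(u_i\) on its two cycle edges. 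The status is the same when \(v_i\) is a source or sink of the cycle, and opposite when \(v_i\) is a pass-through vertex.
\item The number of pass-through vertices is \(\equiv 2k+1 \pmod 2\). Going once around the cycle therefore gives a contradiction already inside \(\mu'(C_{2k+1})-w\).
\end{enumerate}
For \(k=1\), the set \(\{u_i,v_{i-1},v_i,v_{i+1}\}\) is a \(K_4\), so the lemma does not apply, consistent with the example above. This is the parity invariant your plan gestures at, but it attaches to the cycle edges, not to \(w\).
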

\begin{theorem}\cite{kitaev2025note}
Let \(G\) be a graph and let \(\mu(G)\) denote the Mycielski graph of \(G\). Then \(\mu(G)\) is semi-transitive if and only if \(G\) is bipartite.
\end{theorem}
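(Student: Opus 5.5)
\emph{Sufficiency.} If \(G\) is bipartite with colour classes \(A\) and \(B\), I would 3-colour \(\mu(G)\) directly and then apply the theorem that every 3-colorable graph is word-representable. Recall that \(\mu(G)\) has vertex set \(V\cup V'\cup\{w\}\), where \(u'\in V'\) is adjacent to the neighbours of \(u\) in \(G\) and \(w\) is adjacent to all of \(V'\). Give each \(u'\) the same colour as \(u\). This is proper, since the neighbours of \(u'\) in \(V\) are neighbours of \(u\) and so lie in the other class. Then give \(w\) a third colour.

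\emph{Necessity: reduction to an odd cycle.} Suppose \(G\) is not bipartite and let \(C=u_1u_2\cdots u_n\) be a shortest odd cycle, which is chordless. For an induced subgraph \(H\) of \(G\), the vertices \(V(H)\cup V(H)'\cup\{w\}\) induce exactly \(\mu(H)\) in \(\mu(G)\). This is because \(u'v\) is an edge of \(\mu(G)\) iff \(uv\in E(G)\), and for \(u,v\in V(H)\) this holds iff \(uv\in E(H)\). Semi-transitivity is hereditary, so it suffices to show that \(\mu(C_n)\) is not semi-transitive for odd \(n\).

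\emph{Case \(n\ge 5\).} Suppose \(\mu(C_n)\) has a semi-transitive orientation. An odd cycle cannot be oriented alternately, so some index \(i\) has \(u_{i-1}\to u_i\to u_{i+1}\). Since \(n\ge5\), the vertices \(u_{i-1},u_i,u_{i+1},u_i'\) induce a \(4\)-cycle, because \(u_{i-1}u_{i+1}\notin E\) and \(u_iu_i'\notin E\).
\begin{itemize}
\item An acyclic orientation of an induced \(C_4\) that avoids a shortcut (a directed path \(a\to b\to c\to d\) with edge \(ad\) but non-edges \(ac\), \(bd\)) must have two sources.
\item Two sources on a \(4\)-cycle must be opposite, so they are \(\{u_{i-1},u_{i+1}\}\) or \(\{u_i,u_i'\}\).
\item But \(u_i\) and \(u_{i+1}\) both have in-edges, so neither pair works.
\end{itemize}
This contradiction finishes this case and uses only this local check, not \(w\).

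\emph{Case \(n=3\): the main obstacle.} Here \(u_{i-1}u_{i+1}\) is an edge, so the \(4\)-cycle above is not induced and the argument fails. Without loss of generality the triangle is oriented transitively as \(u_1\to u_2\to u_3\) with \(u_1\to u_3\). I would then force the orientations of the edges at each \(u_j'\) by ruling out shortcuts:
\begin{itemize}
\item For \(u_2'\): the path \(u_1\to u_2\to u_3\to u_2'\) with edge \(u_1u_2'\) and non-edge \(u_2u_2'\) forbids \(u_3\to u_2'\). The path \(u_2'\to u_1\to u_2\to u_3\) with edge \(u_2'u_3\) forbids \(u_2'\to u_1\). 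So \(u_1\to u_2'\to u_3\).
\item For \(u_1'\) (adjacent to \(u_2,u_3\)) and \(u_3'\) (adjacent to \(u_1,u_2\)), the same method leaves only a few possibilities each.
\end{itemize}
Finally I would bring in \(w\), adjacent to \(u_1',u_2',u_3'\). I would look for a directed path through \(w\) and two of the \(u_j'\), passing through the triangle, whose endpoints are adjacent but which contains a non-adjacent pair such as \(u_j'u_k'\) or \(wu_j\). This finite case analysis, which \(w\) makes necessary, is the step I expect to need the most care.
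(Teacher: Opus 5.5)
\emph{Verdict: there is a genuine gap in the odd-cycle case $n\ge5$, and the triangle case $n=3$ is only a plan.} Your sufficiency argument and the reduction to $\mu(C_n)$ for a shortest odd cycle are correct. The paper only cites this result, so there is no internal proof to compare against.

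\emph{The case $n\ge5$ rests on a false claim.} An induced $4$-cycle $abcd$ has a shortcut-free acyclic orientation with a single source, namely $a\to b\to c$ and $a\to d\to c$. Its only directed paths on three vertices join the non-adjacent pair $a,c$. In your configuration this is exactly $u_{i-1}\to u_i'\to u_{i+1}$ alongside $u_{i-1}\to u_i\to u_{i+1}$, so no contradiction arises. In fact no argument that avoids $w$ can work. Give each $u_j'$ the colour of $u_j$ in a proper $3$-colouring of $C_n$. This properly $3$-colours $\mu(C_n)-w$, so that graph is semi-transitive, and it contains every $4$-cycle you examine.

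\emph{What your $4$-cycle analysis does give, and how it closes the proof.} In a semi-transitive orientation, a $4$-cycle whose vertex set contains a non-adjacent pair must have exactly two arcs pointing each way along any traversal. Four arcs one way give a directed cycle; three give a shortcut. Apply this for every $i\in\mathbb{Z}_n$ to $u_{i-1}u_iu_{i+1}u_i'$ and to $w\,u_{i-1}'\,u_i\,u_{i+1}'$. The pairs $u_iu_i'$ and $wu_i$ are non-edges, so this needs neither $n\ge5$ nor inducedness. Record the orientations by signs $a_i,b_i,c_i,d_i\in\{\pm1\}$ on the edges $u_iu_{i+1}$, $u_i'u_{i+1}$, $u_{i-1}u_i'$, $wu_i'$, with $+1$ meaning $u_i\to u_{i+1}$, $u_i'\to u_{i+1}$, $u_{i-1}\to u_i'$, $w\to u_i'$ respectively. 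The two families of conditions then read $a_{i-1}+a_i=b_i+c_i$ and $d_{i-1}+b_{i-1}+c_{i+1}=d_{i+1}$. Summing the second over $i$ telescopes to $\sum_i(b_i+c_i)=0$. The first then gives $2\sum_i a_i=0$. This is impossible, since $\sum_i a_i$ is a sum of an odd number of $\pm1$'s. The argument uses $w$ essentially and handles all odd $n\ge3$ uniformly, so your unfinished triangle case analysis becomes unnecessary.
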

Another natural family of highly structured graphs related to shift graphs is given by de Bruijn graphs. For integers \(n\geq 2\) and \(k\geq 1\), the de Bruijn graph \(B(n,k)\) is the graph whose vertices are all words \(x_1x_2\cdots x_k\) of length \(k\) over an alphabet of size \(n\), where two vertices are adjacent whenever the last \(k-1\) symbols of one coincide with the first \(k-1\) symbols of the other. Thus, adjacency is determined by a shift-overlap rule analogous to that of shift graphs. A simplified version of this family, called the simplified de Bruijn graph \(S(n,k)\), is obtained by removing all loops and multiple edges. More generally, \(S_m(n,k)\) denotes the induced subgraph of \(S(n,k)\) whose vertices are those \(k\)-letter strings \(x_1x_2\cdots x_k\) satisfying \(|x_i-x_j|\geq m\) for every pair \(i\neq j\). The word-representability of these graphs and their variants has been investigated in~\cite{petyuk2022word,huang2024embedding}.These graphs are of particular interest in the present context, since shift graphs arise naturally as an induced-subgraph family of simplified de Bruijn graphs. We summarize the main known results below.

\begin{theorem}\cite{petyuk2022word}\label{th1}
Let \(n\) and \(k\) be positive integers. Then the following statements hold:
\begin{enumerate}[label=\textnormal{(\roman*)}]
\item \(S(2,k)\) is word-representable;
\item \(S(n,2)\) is non-word-representable, for $n\geq3$;
\item \(S(n,3)\) is non-word-representable, for $n\geq3$.
\end{enumerate}
\end{theorem}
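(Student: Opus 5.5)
Since $S(n,k)$ is defined through an induced-subgraph construction and word-representability is a hereditary property, the natural strategy is the following. For (ii) and (iii), reduce to $n=3$ and exhibit a small forbidden induced subgraph, namely the wheel $W_5$, which is well known not to be word-representable. For (i), produce a semi-transitive orientation directly and invoke Theorem~\ref{ch}, or more cheaply show $3$-colorability.

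For (ii): the words over $\{a,b,c\}\subseteq\{1,\dots,n\}$ induce a copy of $S(3,2)$ inside $S(n,2)$, so it suffices to treat $n=3$. Take the vertex $ab$. Its neighbours are the words $b?$ with $?\ne$ giving a loop, together with $?a$, that is,
\[
N(ab)=\{ba,bb,bc,aa,ca\}.
\]
A direct check of the shift rule gives the edges $aa\!-\!ba$, $ba\!-\!bb$, $bb\!-\!bc$, $bc\!-\!ca$ and $ca\!-\!aa$. None of the remaining pairs $aa\!-\!bb$, $aa\!-\!bc$, $ba\!-\!bc$, $ba\!-\!ca$, $bb\!-\!ca$ is an edge. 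So $N(ab)$ induces a $C_5$ and $\{ab\}\cup N(ab)$ induces $W_5$. Since $W_5$ has no semi-transitive orientation (any acyclic orientation of an odd wheel produces a shortcut), $S(n,2)$ is not word-representable for $n\ge 3$.

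For (iii): again restrict to the alphabet $\{a,b,c\}$ and search $S(3,3)$ for a vertex whose neighbourhood induces an odd cycle without chords. A vertex $xyz$ has neighbours $yz?$ and $?xy$, so degree at most $6$. I would first try centres with repeated pattern such as $aba$ or $abc$, list the (at most six) neighbours, and check adjacencies among them by the shift rule to locate an induced $C_5$ (giving $W_5$) or another known minimal non-word-representable graph. This finite search is the main obstacle. If no wheel appears, the fallback is a direct argument that a small induced subgraph found around such a vertex admits no semi-transitive orientation, by case analysis on the orientation of a central vertex's edges.

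For (i): $S(2,k)$ has maximum degree at most $4$. I would try to show it is $3$-colorable, for instance by a greedy/Brooks-type argument using that no component is $K_5$. Then word-representability follows from the $3$-colorability theorem of~\cite{HALLDORSSON2016164}. Alternatively, I would orient each edge from the word of smaller binary value to the larger one and verify that no shortcut arises, using that the graph is close to triangle-free.
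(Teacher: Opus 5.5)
Your part (ii) is correct and complete: $N(ab)=\{aa,ba,bb,bc,ca\}$ induces the $5$-cycle $aa,ba,bb,bc,ca$, so $\{ab\}\cup N(ab)$ induces $W_5$ in $S(n,2)$. (The paper only cites this theorem, so there is no in-paper proof to compare with.) Part (iii), however, is not proved, and the search you propose for it cannot succeed, because no vertex of $S(n,3)$ has a non-bipartite neighbourhood. The neighbours of $xyz$ are the words $yzw$ and $wxy$. For non-constant $xyz$, the only adjacencies among them are the star centred at $yyy$ (if $y=z$), the star centred at $xxx$ (if $x=y$), and the single edge between $yzx$ and $zxy$; together these form a forest. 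The neighbourhood of $xxx$ is the complete bipartite graph between $\{xxw: w\ne x\}$ and $\{wxx: w\ne x\}$. So there is no odd wheel, and the standard necessary condition (every neighbourhood is a comparability graph) holds at every vertex. A witness for (iii) must therefore be a genuinely global induced subgraph of $S(3,3)$, and in particular one that is not $3$-colourable. You would have to exhibit it and prove, by exhaustive case analysis or a verifiable computer search, that it has no semi-transitive orientation. Your fallback names neither the subgraph nor the argument.

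Part (i) also has a gap. With $\Delta(S(2,k))\le 4$, Brooks' theorem gives only $\chi\le 4$, which is useless here, since $W_5$ itself is $4$-chromatic. Your alternative orientation by binary value is not semi-transitive. In $S(2,3)$, the path $000\to001\to010\to100$ together with the arc $000\to100$ is a shortcut, since $000$ and $010$ are non-adjacent. The $3$-colourability you are after is true, but it needs an actual argument, for instance induction through the suffix map. For $k\ge 2$, the map $x_1\cdots x_k\mapsto x_2\cdots x_k$ is a homomorphism from $S(2,k)-\{0^k,1^k\}$ to $S(2,k-1)$. Indeed, adjacent words $x_1\cdots x_k$ and $x_2\cdots x_{k+1}$ have suffixes $x_2\cdots x_k$ and $x_3\cdots x_{k+1}$, which are adjacent unless equal, and they are equal only when $x_2\cdots x_{k+1}$ is constant. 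The words $0^k$ and $1^k$ have just two neighbours each, so they can be coloured last, giving $\chi(S(2,k))\le\max\{\chi(S(2,k-1)),3\}$. Starting from $S(2,1)=K_2$, this yields $\chi(S(2,k))\le 3$, and (i) then follows from the $3$-colourability theorem.
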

\begin{theorem}\cite{huang2024embedding}\label{th2}
Let \(m,n,\) and \(k\) be positive integers. Then the following statements hold:
\begin{enumerate}[label=\textnormal{(\roman*)}]
\item For \(k=2,3,4\) and \(n\leq (k+1)m\), the graph \(S_m(n,k)\) is word-representable. Moreover, \(S_m(n,2)\) is non-word-representable for \(n\geq 3m+1\).

\item For \(k\geq 5\) and \(n\leq 2mk\), the graph \(S_m(n,k)\) is word-representable.

\end{enumerate}
\end{theorem}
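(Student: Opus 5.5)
The plan is to treat the two parts of the statement separately. For the positive claims we will produce a semi-transitive orientation and appeal to Theorem~\ref{ch}. For the negative claim about \(S_m(n,2)\) we will exhibit an induced subgraph that has no semi-transitive orientation. Both parts rest on one structural observation. Every vertex \(x_1\cdots x_k\) of \(S_m(n,k)\) has pairwise distinct letters, and these letters are pairwise at distance at least \(m\). An edge \(x\sim y\) with \(x_{i+1}=y_i\) therefore corresponds to a word \(x_1x_2\cdots x_ky_k\) of length \(k+1\) in which any \(k\) consecutive letters are \(m\)-separated. When \(n\le (k+1)m\), at most \(k+1\) letters of \([n]\) can be pairwise \(m\)-separated. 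So each connected piece of the graph involves very few "letter classes" modulo the spacing.

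For the positive part (\(k=2,3,4\) with \(n\le (k+1)m\), and \(k\ge5\) with \(n\le 2mk\)), we would first try to show that \(S_m(n,k)\) is \(3\)-colorable and then invoke the theorem that every \(3\)-colorable graph is word-representable. The natural colouring assigns to a vertex a colour determined by the relative order pattern of its letters, for instance the position of the minimum letter or whether \(x_1<x_k\). An edge is a one-step shift, and the shift changes this pattern in a controlled way, which should make such a colouring proper. If a \(3\)-colouring fails in some case (it may for \(k\ge5\), where the bound \(2mk\) allows more letters), we fall back on a direct semi-transitive orientation. That orientation is the lexicographic one: orient \(x\to y\) when \(x<_{\mathrm{lex}}y\), which is automatically acyclic. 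Any violation of semi-transitivity would be a shortcut \(u_1\to u_t\) along a longer directed path, and it would force a long window of letters that are pairwise \(m\)-separated. We would then show that such a window needs more than \(n\) positions, contradicting \(n\le 2mk\) (respectively \(n\le (k+1)m\)).

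For the negative part, fix \(n\ge 3m+1\) and use the four letters \(a_j=1+jm\), \(j=0,1,2,3\), which are pairwise \(m\)-separated. Every ordered pair \(a_ia_j\) with \(i\neq j\) is then a vertex of \(S_m(n,2)\). Consequently the induced subgraph on these twelve vertices is exactly the loopless part of \(S(4,2)\): \(a_ia_j\sim a_ja_l\), with edges for \(l=i\) as well. Non-word-representability is hereditary for induced subgraphs, so it suffices to find inside this twelve-vertex graph a known non-semi-transitive graph, such as an odd wheel \(W_5\) or a graph from the list of minimal non-word-representable graphs. Alternatively, we can rule out semi-transitive orientations directly by a short case analysis on the orientation of one triangle-free \(5\)-cycle and its neighbours, imitating the argument behind Theorem~\ref{th1}(ii).

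The main obstacle is the positive part for \(k\ge5\). There the structure is rich enough that a simple pattern colouring may need more than three colours. The shortcut analysis for the lexicographic orientation also requires a careful counting of how far a shortcut can jump while keeping all windows \(m\)-separated within \(n\le 2mk\). This is where I would spend most effort, starting from the lexicographic orientation and checking small \(m=1\) cases by hand to guide the general counting.
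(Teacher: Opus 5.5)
The paper gives no proof of this statement; it is quoted from \cite{huang2024embedding}. Your proposal therefore has to stand on its own, and as written it establishes neither half.

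\textbf{Positive claims.} Both of your routes fail as stated.

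The suggested colourings are not proper. In $S_1(4,3)$ (the case $k=3$, $m=1$, $n=(k+1)m$), the adjacent vertices $123$ and $234$ both have their minimum in the first position and both satisfy $x_1<x_k$.

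The lexicographic fallback is not semi-transitive. Put $a_t=1+tm$ for $0\le t\le k$; these letters exist as soon as $n\ge km+1$. That covers the top of the range in (i) for $k=3,4$, and all $n$ with $km+1\le n\le 2mk$ in (ii). The $k+1$ cyclic windows $W_t=a_ta_{t+1}\cdots a_{t+k-1}$ (indices mod $k+1$) induce a cycle $C_{k+1}$. Lexicographic order turns it into the path $W_0\to W_1\to\cdots\to W_k$ plus the arc $W_0\to W_k$, which is a shortcut for $k\ge 3$. For example, $123\to234\to341\to412$ together with $123\to412$. So your counting claim, that a shortcut needs a window longer than $n$, is false: this shortcut uses only $k+1$ letters.

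For (i), the $3$-colouring idea can be rescued using your own separation observation. Let $\beta(x)=\lceil x/m\rceil$. Then $m$-separated letters lie in distinct blocks, and there are at most $k+1$ blocks. Colour $x$ as follows:
\begin{itemize}
\item for $k=2$, by $\beta(x_1)$;
\item for $k=3$, by the perfect matching of $K_4$ that contains $\{\beta(x_1),\beta(x_2)\}$;
\item for $k=4$, by a proper $3$-colouring of the Petersen graph $K(5,2)$ evaluated at $\{\beta(x_1),\beta(x_3)\}$.
\end{itemize}
For (ii), where up to $2k$ blocks occur, your proposal offers no working argument at all.

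\textbf{Negative claim.} Your reduction to the twelve-vertex graph $H\cong S_1(4,2)$ on the letters $1,m+1,2m+1,3m+1$ is correct. However, the real content, that $H$ admits no semi-transitive orientation, is left open. The certificate you suggest cannot exist: writing $\{l,p\}$ for the complement of $\{i,j\}$, the neighbourhood $\{ji,jl,li,jp,pi\}$ of $ij$ induces $K_1\cup 2K_2$, so $H$ contains no wheel.

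Here is an argument that does work.
\begin{itemize}
\item For each letter $p$, the three vertices $ip$ ($i\ne p$) and the three vertices $pj$ ($j\ne p$) induce a $K_{3,3}$.
\item In a semi-transitive orientation, every induced $4$-cycle has exactly two edges oriented each way around it.
\item Together with acyclicity, this forces each such $K_{3,3}$ to be layered. Either $ip\to pj \iff ip\to pi$ for all $i,j$, or $ip\to pj\iff jp\to pj$ for all $i,j$.
\item Record the orientations of the six edges $\{ij,ji\}$ as a tournament on the four letters.
\item Impose the same parity condition on the six induced $4$-cycles $ij,jl,lp,pi$ (with $i,j,l,p$ distinct).
\item This gives a contradiction for each of the four tournaments on four vertices.
\end{itemize}
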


\section{Word-Representability of Shift Graphs}\label{s}
In this section, we present our results on the word-representability of shift graphs. Observe that, for \(n>k\geq 2\), the vertices of the shift graph \(G(n,k)\) are all increasing \(k\)-tuples, and hence \(G(n,k)\) naturally appears as an induced subgraph of the simplified de Bruijn graph \(S(n,k)\). Although Theorem~\ref{th1} shows that both \(S(n,2)\) and \(S(n,3)\) are non-word-representable for \(n\geq 3\), we prove in the following theorem that every shift graph is word-representable.
\begin{theorem}\label{1}
 Every shift graph is word-representable.
 \end{theorem}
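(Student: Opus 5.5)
The plan is to use Theorem~\ref{ch}: I will give an explicit orientation of $G(n,k)$ and check that it is semi-transitive. For $k=1$ the shift condition is vacuous, so $G(n,1)=K_n$, which is trivially word-representable. So assume $n>k\ge 2$.

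\textbf{Orientation.} If $x=(x_1,\dots,x_k)$ and $y=(y_1,\dots,y_k)$ are adjacent with $y_i=x_{i+1}$ for all $1\le i\le k-1$, orient the edge $x\to y$. In words, $y$ is obtained from $x$ by dropping $x_1$ and appending a larger element. This is well defined, because the two shift conditions cannot both hold: both together would give $y_1=x_2>x_1=y_2$, contradicting $y_1<y_2$. Along every arc the first coordinate strictly increases, since $y_1=x_2>x_1$. Hence the orientation is acyclic.

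\textbf{Shape of a directed path.} Let $u_1\to u_2\to\cdots\to u_t$ be a directed path. Induction on $j$ shows there is a strictly increasing sequence $a_1<a_2<\cdots<a_{t+k-1}$ with $u_j=(a_j,a_{j+1},\dots,a_{j+k-1})$. That is, consecutive vertices of the path are consecutive length-$k$ windows of one increasing sequence.

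\textbf{Semi-transitivity.} For $t=2$ the condition is trivial, so take $t\ge 3$. The graph is triangle-free for $k\ge2$: this follows from the same window structure together with acyclicity, or it can be taken as the classical fact. So the vertices $u_1,\dots,u_t$ can never induce a transitive tournament, and it suffices to show that $u_1$ and $u_t$ are non-adjacent. There are two possibilities for an edge.
\begin{itemize}
\item If $u_t$ were a shift of $u_1$, then $u_t=(a_2,\dots,a_{k+1})=u_2$. This is impossible because $t\ge3$ and the first coordinates satisfy $a_t>a_2$.
\item If $u_1$ were a shift of $u_t$, then the first coordinate of $u_1$ would equal the second coordinate of $u_t$, i.e.\ $a_1=a_{t+1}$. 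This contradicts strict monotonicity.
\end{itemize}
Hence no shortcut exists, the orientation is semi-transitive, and Theorem~\ref{ch} gives the result.

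The only step needing care is the window description of directed paths. Once that is established, the semi-transitivity check is immediate.
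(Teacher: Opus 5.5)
Your proof is correct and follows the paper's route: orient each edge in the direction of the shift, then use the strictly increasing chain along a directed path (your window sequence $a_1<a_2<\cdots$, which is the paper's chain of inequalities) to get acyclicity and to rule out any edge between $u_1$ and $u_t$ for $t\ge 3$. Two small points. First, the triangle-freeness remark is unnecessary, since non-adjacency of $u_1$ and $u_t$ already satisfies the definition. Second, in your first bullet the shift condition only fixes the first $k-1$ coordinates of $u_t$, so ``$u_t=u_2$'' is overstated; the first-coordinate contradiction $a_t=a_2$ is all you need.
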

 \begin{proof}
To prove word-representability, we first define an orientation of the edges of the shift graph \(G(n,k)\). Let 
\(
\left(x_1^{(1)},x_2^{(1)},\dots,x_k^{(1)}\right)
\) and \(
\left(x_1^{(2)},x_2^{(2)},\dots,x_k^{(2)}\right)
\) be two adjacent vertices of \(G(n,k)\). By definition, exactly one of the following holds:
\(
x_{i}^{(1)}=x_{i+1}^{(2)} \;\; \text{for all } 1\leq i\leq k-1,
\)
or
\(
x_{i}^{(2)}=x_{i+1}^{(1)} \;\; \text{for all } 1\leq i\leq k-1.
\)
If
\(
x_{i}^{(1)}=x_{i+1}^{(2)} \;\; \text{for all } 1\leq i\leq k-1,
\)
then orient the edge from \(\left(x_1^{(1)},x_2^{(1)},\dots,x_k^{(1)}\right)\) to \(
\left(x_1^{(2)},x_2^{(2)},\dots,x_k^{(2)}\right)
\); otherwise, orient the edge from \(
\left(x_1^{(2)},x_2^{(2)},\dots,x_k^{(2)}\right)
\) to \(\left(x_1^{(1)},x_2^{(1)},\dots,x_k^{(1)}\right)\). 

Let there be a directed path of the form
\[
\left(x_1^{(1)},x_2^{(1)},\dots,x_k^{(1)}\right)
\rightarrow
\left(x_1^{(2)},x_2^{(2)},\dots,x_k^{(2)}\right)
\rightarrow \cdots \rightarrow
\left(x_1^{(j)},x_2^{(j)},\dots,x_k^{(j)}\right).
\]
If possible, let there be a directed edge from
\(
\left(x_1^{(j)},x_2^{(j)},\dots,x_k^{(j)}\right)
\)
to
\(
\left(x_1^{(1)},x_2^{(1)},\dots,x_k^{(1)}\right).
\)
Then, by the definition of the orientation,
\(
x_{i}^{(j)}=x_{i+1}^{(1)} \;\; \text{for all } 1\leq i\leq k-1.
\)
On the other hand, from the adjacency relations along the directed path, we obtain
\[
x_i^{(1)}
<
x_{i+1}^{(1)}=x_i^{(2)}
<
x_{i+1}^{(2)}=x_i^{(3)}
<
\cdots
<
x_{i+1}^{(j-1)}=x_i^{(j)}
<
x_{i+1}^{(j)}.
\]
Hence,
\(
x_{i}^{(j)}> x_{i+1}^{(1)},
\) which is
a contradiction. Therefore, no directed cycle can occur, and so the orientation is acyclic.

If possible, let there be a directed edge from
\(
\left(x_1^{(1)},x_2^{(1)},\dots,x_k^{(1)}\right)
\)
to
\(
\left(x_1^{(j)},x_2^{(j)},\dots,x_k^{(j)}\right).
\)
Then, by the definition of the orientation,
\(
x_{i+1}^{(1)}=x_i^{(j)} \;\; \text{for all } 1\leq i\leq k-1.
\)
However, from the above chain of inequalities, we have
\(
x_{i+1}^{(1)}<x_i^{(j)},
\)
which is a contradiction. Hence, for any directed path on three or more vertices, there is no directed edge from the first vertex to the last vertex. Since the orientation is acyclic and satisfies the condition of semi-transitivity, \(G(n,k)\) is word-representable.

 \end{proof}

 We introduce a natural generalization of the shift graph in which adjacency is determined by more than one shift condition.
\begin{definition}
Let \(n,k,\) and \(m\) be integers satisfying \(n>k\geq 2\) and \(1\leq m<k\). The \emph{\(m\)-shift graph}, denoted by \(G_m(n,k)\), is the graph whose vertex set consists of all increasing \(k\)-tuples
\(
\left(x_1^{(1)},x_2^{(1)},\dots,x_k^{(1)}\right)
\)
with
\(
1\leq x_1^{(1)}<x_2^{(1)}<\dots<x_k^{(1)}\leq n.
\)
Two distinct vertices \(
\left(x_1^{(1)},x_2^{(1)},\dots,x_k^{(1)}\right)
\) and \(
\left(x_1^{(2)},x_2^{(2)},\dots,x_k^{(2)}\right)
\) are adjacent if and only if
\(
x_{i+m}^{(1)}=x_i^{(2)} \;\; \text{for all } 1\leq i\leq k-m,
\)
or equivalently,
\(
x_{i+m}^{(2)}=x_i^{(1)} \;\; \text{for all } 1\leq i\leq k-m.
\)
\end{definition}

 In the following, we establish word-representability of  \(m\)-shift graphs.
 \begin{theorem}\label{2}
 Every \(m\)-shift graph is word-representable.
 \end{theorem}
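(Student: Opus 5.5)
We mimic the proof of Theorem~\ref{1} and invoke Theorem~\ref{ch}. Orient an edge of \(G_m(n,k)\) from \(\left(x_1^{(1)},\dots,x_k^{(1)}\right)\) to \(\left(x_1^{(2)},\dots,x_k^{(2)}\right)\) whenever \(x_{i+m}^{(1)}=x_i^{(2)}\) for all \(1\leq i\leq k-m\). Along such an arc the tuple is shifted \(m\) places to the right.

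First we check that this is an orientation, i.e.\ that both conditions cannot hold at once. If \(x_{i+m}^{(1)}=x_i^{(2)}\) for all \(i\), then \(x_1^{(2)}=x_{1+m}^{(1)}>x_1^{(1)}\). If also \(x_{i+m}^{(2)}=x_i^{(1)}\), then \(x_1^{(1)}=x_{1+m}^{(2)}>x_1^{(2)}\), a contradiction.

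The key invariant is monotonicity of a single coordinate along arcs. Along any arc \(u\to v\) we have \(v_i=u_{i+m}>u_i\) for every \(i\leq k-m\). In particular the first coordinate strictly increases, so the orientation is acyclic.

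For semi-transitivity we take a directed path \(u^{(1)}\to u^{(2)}\to\cdots\to u^{(j)}\) with \(j\geq 3\) and show there is no arc in either direction between \(u^{(1)}\) and \(u^{(j)}\). Then the shortcut condition holds vacuously.
\begin{itemize}
\item An arc \(u^{(j)}\to u^{(1)}\) would force \(x_1^{(1)}>x_1^{(j)}\). This contradicts the strict increase of first coordinates along the path.
\item For an arc \(u^{(1)}\to u^{(j)}\), we need \(x_1^{(j)}>x_{1+m}^{(1)}\) strictly. Since \(x_1^{(2)}=x_{1+m}^{(1)}\) and first coordinates strictly increase from \(u^{(2)}\) to \(u^{(j)}\) (using \(j\geq 3\)), we get \(x_1^{(j)}>x_1^{(2)}=x_{1+m}^{(1)}\). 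This contradicts \(x_1^{(j)}=x_{1+m}^{(1)}\).
\end{itemize}
So the orientation is semi-transitive, and Theorem~\ref{ch} gives word-representability.

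The main point of care, unlike the \(m=1\) case, is that one cannot chain equalities coordinatewise through the path. Index \(i+m\) may exceed \(k-m\), so a chain like \(x_{i+2m}\) need not be defined. Tracking only the first coordinate avoids this entirely, since it needs only \(k-m\geq 1\). That holds because \(m<k\).
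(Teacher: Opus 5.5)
Your proof is correct and follows essentially the same route as the paper: the same shift orientation, with your strictly increasing first coordinate being the \(i=1\) instance of the paper's chain \(x_i^{(1)}<x_{i+m}^{(1)}=x_i^{(2)}<x_{i+m}^{(2)}=x_i^{(3)}<\cdots<x_i^{(j)}<x_{i+m}^{(j)}\), which it uses in the same way to rule out both a back arc and a shortcut arc. One correction to your closing remark: that chain keeps \(i\) fixed and alternates adjacency equalities with within-tuple inequalities, so it never needs \(x_{i+2m}\); the indexing issue you flag therefore does not arise, and it does not distinguish \(m=1\) from general \(m\).
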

 \begin{proof}
    To prove word-representability, we first define an orientation of the edges of the shift graph \(G_m(n,k)\). Let \(
\left(x_1^{(1)},x_2^{(1)},\dots,x_k^{(1)}\right)
\) and \(
\left(x_1^{(2)},x_2^{(2)},\dots,x_k^{(2)}\right)
\) be two adjacent vertices of \(G_m(n,k)\). By definition, exactly one of the following holds:
\(
x_{i+m}^{(1)}=x_i^{(2)} \;\; \text{for all } 1\leq i\leq k-m,
\)
or
\(
x_{i+m}^{(2)}=x_i^{(1)} \;\; \text{for all } 1\leq i\leq k-m.
\)
If
\(
x_{i+m}^{(1)}=x_i^{(2)} \;\; \text{for all } 1\leq i\leq k-m,
\)
then orient the edge from \(\left(x_1^{(1)},x_2^{(1)},\dots,x_k^{(1)}\right)\) to \(
\left(x_1^{(2)},x_2^{(2)},\dots,x_k^{(2)}\right)
\); otherwise, orient the edge from \(
\left(x_1^{(2)},x_2^{(2)},\dots,x_k^{(2)}\right)
\) to \(\left(x_1^{(1)},x_2^{(1)},\dots,x_k^{(1)}\right)\). 

Let there be a directed path of the form
\[
\left(x_1^{(1)},x_2^{(1)},\dots,x_k^{(1)}\right)
\rightarrow
\left(x_1^{(2)},x_2^{(2)},\dots,x_k^{(2)}\right)
\rightarrow \cdots \rightarrow
\left(x_1^{(j)},x_2^{(j)},\dots,x_k^{(j)}\right).
\]
If possible, let there be a directed edge from
\(
\left(x_1^{(j)},x_2^{(j)},\dots,x_k^{(j)}\right)
\)
to
\(
\left(x_1^{(1)},x_2^{(1)},\dots,x_k^{(1)}\right).
\)
Then, by the definition of the orientation,
\(
x_{i+m}^{(j)}=x_i^{(1)} \;\; \text{for all } 1\leq i\leq k-m.
\)
On the other hand, from the adjacency relations along the directed path, we obtain
\[
x_i^{(1)}
<
x_{i+m}^{(1)}=x_i^{(2)}
<
x_{i+m}^{(2)}=x_i^{(3)}
<
\cdots
<
x_{i+m}^{(j-1)}=x_i^{(j)}
<
x_{i+m}^{(j)}.
\]
Hence,
\(
x_{i+m}^{(j)}> x_i^{(1)},
\) which is
a contradiction. Therefore, no directed cycle can occur, and so the orientation is acyclic.

If possible, let there be a directed edge from
\(
\left(x_1^{(1)},x_2^{(1)},\dots,x_k^{(1)}\right)
\)
to
\(
\left(x_1^{(j)},x_2^{(j)},\dots,x_k^{(j)}\right).
\)
Then, by the definition of the orientation,
\(
x_{i+m}^{(1)}=x_i^{(j)} \;\; \text{for all } 1\leq i\leq k-m.
\)
However, from the above chain of inequalities, we have
\(
x_{i+m}^{(1)}<x_i^{(j)},
\)
which is a contradiction. Hence, for any directed path on three or more vertices, there is no directed edge from the first vertex to the last vertex. Since the orientation is acyclic and satisfies the condition of semi-transitivity, \(G_m(n,k)\) is word-representable.

 \end{proof}

 Although the word-representability of shift graphs follows directly from Theorem~\ref{1}, it can also be established in an essentially identical manner to the proof of Theorem~\ref{2}. We now turn to two related graph transformations, namely line graphs and line digraphs.

For an undirected graph \(G\), the \emph{line graph} \(L(G)\) is the graph whose vertex set is \(E(G)\), where two vertices of \(L(G)\) are adjacent if and only if the corresponding edges of \(G\) share a common endpoint.

Similarly, a \emph{line digraph}, denoted by \(L(\overrightarrow{G})\), is the digraph obtained from a base digraph \(\overrightarrow{G}\) in which each arc of \(\overrightarrow{G}\) becomes a vertex of \(L(\overrightarrow{G})\). A directed edge (arc) exists from one vertex to another in \(L(\overrightarrow{G})\) whenever the head of the first corresponding arc in \(\overrightarrow{G}\) coincides with the tail of the second.

We first observe that \(G(n,2)\) can be realized as the underlying graph line digraph of a transitive tournament. Let \(\overrightarrow{K}_n\) be the transitive orientation of the complete graph on vertex set \({1,2,\dots,n}\), where \(i\to j\) whenever \(i<j\). Each arc \((i,j)\) of \(\overrightarrow{K}_n\) corresponds naturally to the vertex \((i,j)\) of \(G(n,2)\).

Now two vertices \((i,j)\) and \((j,k)\) of \(G(n,2)\) are adjacent precisely when \(i<j<k\), that is, exactly when the head of the arc \((i,j)\) coincides with the tail of the arc \((j,k)\). Hence there is an arc from \((i,j)\) to \((j,k)\) in the line digraph \(L(\overrightarrow{K}_n)\). Thus \(G(n,2)\) can be viewed as the underlying graph of the line digraph of a transitive tournament.

But, as shown in \cite{kitaev2011word}, the line graph \(L(K_n)\) is non-word-representable for \(n\geq 5\). On the other hand, any acyclic orientation of \(K_n\) yields a transitive tournament, and the underlying graph of its line digraph is isomorphic to \(G(n,2)\), which is word-representable. Hence, we obtain an explicit family of word-representable graphs $\{K_n:n\geq5\}$ whose line graphs are non-word-representable, while the underlying graphs of their line digraphs remain word-representable.

\section{Concluding Remarks}\label{conclusion}

We have shown that shift graphs and their generalized \(m\)-shift variants \(G_m(n,k)\) are word-representable. We also highlighted a contrast between line graph and line digraph constructions with respect to word-representability.

Several natural questions remain open. It would be interesting to determine the representation numbers of these families and to further explore their connections with interval orders and related order-theoretic structures. Also, it would be natural to characterize broader overlap-type graph classes with respect to word-representability and to investigate graph transformations preserving word-representability. Another promising direction is to study line digraphs arising from semi-transitive orientations: although complete graphs admit many acyclic orientations, all such orientations are transitive, and the underlying graphs of their line digraphs are all isomorphic to \(G(n,2)\), whereas a general word-representable graph may possess several different semi-transitive orientations, leading to different line digraphs. It would therefore be interesting to determine whether every word-representable graph admits at least one semi-transitive orientation whose line digraph is again word-representable.

\bibliographystyle{plain}
 \bibliography{WR_shift_graphs_v1}
\end{document}